\numberwithin{equation}{section}
\theoremstyle{plain}
\newtheorem{thm}{Theorem}[section]
\newtheorem{prop}[thm]{Proposition}
\newtheorem{lem}[thm]{Lemma}
\theoremstyle{definition}
\theoremstyle{remark}
\newtheorem{rem}[thm]{Remark}
\newcommand{\R}{\mathbb{R}}
\newcommand\N{{\mathbb N}}
\newcommand\tx{{\tilde x}}
\newcommand\Z{{\mathbb Z}}
\newcommand\T{{\mathbb T}}
\newcommand\pref[1]{(\ref{#1})}
\let \eps\varepsilon
\newcommand\A{{\cal A}}
\newcommand{\g}{\gamma}
\newcommand{\imgbox}[1]{\setlength{\fboxsep}{0pt}\fbox{#1}}
\newcommand{\Om}{\Omega}
\newcommand{\ddd}{:=}
\DeclareMathOperator{\Hil}{Hil}
\DeclareMathOperator{\id}{id}
\def\H {\operatorname{H}}
\newcommand\Sdif{\mathop{\mathrm{Sdiff}}\nolimits}
\newcommand\Law{\mathop{\mathrm{Law}}\nolimits}
\newcommand\proj{\mathop{\mathrm{proj}}\nolimits}
\newcommand\Ent{\mathop{\mathrm{Ent}}\nolimits}
\newcommand\dist{\mathop{\mathrm{dist}}\nolimits}
\newcommand\dive{\mathrm{div}}
\newcommand\GIF{\mathrm{GIF}}
\def\<#1,#2>{\left<#1,#2\right>}
\newcommand\xx{\mathbf{x}}
\newcommand\Leb{{\cal L}}
\newcommand\Tc{{\cal T}}
\newcommand\E{{\cal E}}
\def\PP{{\cal P}}
\def\D{{\cal D}}
\newcommand{\TabOne}[1]{ %
\begin{tabular}{|p{12cm}|}
 #1
\end{tabular}
}
\newcommand{\TabFour}[1]{ %
\begin{tabular}{@{}c@{\hspace{1mm}}c@{\hspace{1mm}}c@{\hspace{1mm}}c@{}}
 #1
\end{tabular}
}
\newcommand{\TabFive}[1]{ %
\begin{tabular}{@{}c@{\hspace{1mm}}c@{\hspace{1mm}}c@{\hspace{1mm}}c@{\hspace{1mm}}c@{}}
 #1
\end{tabular}
}
\title{Generalized incompressible flows, multi-marginal transport  and Sinkhorn algorithm}
\author{Jean-David Benamou \thanks{INRIA-Paris, MOKAPLAN,  rue Simone Iff, 75012, Paris, France 
\texttt{Jean-David.Benamou@inria.fr.}}  $\,^{\dagger}$
\and
Guillaume Carlier\thanks{Universit\'{e} Paris-Dauphine, PSL Research University, CEREMADE (UMR CNRS 7534), 75016 Paris, France 
\texttt{carlier@ceremade.dauphine.fr}} $\,^{*}$
\and
Luca Nenna\thanks{CNRS and Universit\'{e} Paris-Dauphine, PSL Research University, CEREMADE (UMR CNRS 7534), 75016 Paris, France  \texttt{nenna@ceremade.dauphine.fr}}  
}
\begin{document}

\maketitle

\begin{abstract}
Starting from Brenier's relaxed formulation of the incompressible Euler equation in terms of geodesics in the group of measure-preserving diffeomorphisms, we propose a numerical method based on Sinkhorn's algorithm for the entropic regularization of optimal transport. We also make a detailed comparison of this entropic regularization with the so-called Bredinger  entropic interpolation problem (see \cite{leonardBredinger}). 
Numerical results in dimension one and two illustrate the feasibility of the method. 

\end{abstract}

\textbf{Keywords:} Incompressible Euler equations, generalized incompressible flows, multi-marginal optimal transport, entropic regularization, Sinkhorn algorithm.

\medskip

\textbf{MS Classification:} 76M30, 65K10.

\section{Introduction}\label{sec-intro}
The motion of  incompressible inviscid fluids inside a bounded domain $\D\subset\R^d$ (or, as we shall often consider the periodic in space case, $\D=\T^d:=\R^d / 2\pi\Z^d$ is the flat torus) without the action of external forces is governed by the equations introduced by Euler in 1755  \cite{euler1755principes}:

\begin{equation}
\label{euler-eq}
\begin{cases}
\partial_{t}u+(u\cdot\nabla)u+\nabla p=0\quad&\text{in}\;(0,T)\times\D\\
\dive(u)=0\quad&\text{in}\; (0,T)\times\D\\
u\cdot n=0\quad&\text{on}\;(0,T)\times\partial\D, 
\end{cases}
\end{equation}
where $n$ denotes the unit normal to $\partial \D$, $u$ denotes the velocity field and $p$ is the pressure. As was first emphasized by Arnold \cite{arnold66}, also see Arnold and Khesin \cite{arnoldkhesin}, \pref{euler-eq} can be seen, at least formally, in Lagrangian coordinates as the Euler-Lagrange for the minimization of the action
\begin{equation}
\label{arnold} 
\A(X):=\int_0^T \Vert \dot{X}\Vert^2_{L^2(\D)} \mbox{d} t
\end{equation}
subject to the constraint that $t\mapsto X(t,.)$ is  a path in $\Sdif$, the group of Lebesgue-measure preserving diffeomorphisms of $\D$. Indeed, the incompressibility constraint  translates in Eulerian terms as the requirement that the velocity  field $u$ associated with $X$, through $\partial_t X(t,x)=u(t, X(t,x))$ is divergence-free. The pressure $p$ acts as a Lagrange multiplier for this constraint  and the optimality equation for the minimization of $\A$ on paths constrained to remain in $\Sdif$ leads to \pref{euler-eq}.

\smallskip 

From now on, we shall consider Brenier's relaxed formulation \cite{brenier1989least}, \cite{brenier1993dual}, \cite{brenier1999minimal}, \cite{brenier2008generalized} of the minimizing geodesic problem between an initial and terminal configuration of the fluid. This formulation which allows splitting and crossing of particles, is based on the notion of generalized incompressible flow (GIF). Denoting  by $\Leb$ the Lebesgue measure on $\D$ (normalized so as to be a probability measure on $\D$), by $\Omega$ the path space 
\[\Omega:=C([0,T], \D)\]
and for $\omega\in \Omega$ and $t\in [0,T]$, the evaluation map at time $t$ is defined by $e_t(\omega):=\omega(t)$, the set of generalized incompressible flows is by definition the set of probability measures $Q$ on $\Omega$ such that ${e_t}_\#Q=\Leb$ for every $t\in [0,T]$,
\begin{equation}\label{defgif}
\GIF:=\{Q \in \PP(\Omega) \; : \; {e_t}_\#Q=\Leb, \;  \forall t\in [0,T]\}.
\end{equation}
We are also given $\pi_{0,T}\in \PP(\D\times \D)$ a probability measure on $\D\times \D$ having $\Leb$ as marginals and which captures the joint distribution of particles at times $0$ and $T$ (one may think for instance the deterministic coupling $\pi_{0,T}:=(\id, X_T)_\# \Leb$ where $X_T\in \Sdif$ represents the terminal Lagrangian configuration of the fluid).  The set of generalized incompressible flows  compatible with $\pi_{0,T}$ is then given by 
\begin{equation}
\GIF(\pi_{0,T}):=\{Q \in \GIF \; : \; ({e_0}, e_T)_\#Q=\pi_{0,T}\}.
\end{equation}
For $\omega\in  \Omega$ we denote by $E(\omega)$ its kinetic action:
\begin{equation}\label{defkine}
E(\omega):= \begin{cases} \frac{1}{2} \int_0^T \vert \dot{\omega} (t)\vert^2 \mbox{d} t \mbox{ if $\omega \in H^1((0,T), \D)$} \\ +\infty \mbox{ otherwise.} \end{cases}
\end{equation}
Brenier's relaxation of Arnold's geodesic problem then reads as the infinite-dimensional linear-programming problem
\begin{equation}\label{brerelax}
\inf_{Q \in \GIF(\pi_{0,T}) } \E(Q):=\int_{\Omega} E(\omega) \mbox{d} Q(\omega).
\end{equation}
This formulation can be viewed as an optimal transport problem with infinitely many marginal constraints corresponding to the incompressibility of the flow and an additional constraint corresponding to the prescribed  joint initial/terminal  distribution $\pi_{0,T}$.  It is probably the first instance of the nowadays active field of multi-marginal optimal transport \cite{pass2015multi}.

M\'erigot and Mirebeau \cite{memi}  recently produced a tractable numerical method for a non-convex Lagrangian formulation of  (\ref{brerelax}).   
The marginal constraints are penalized using   semi-discrete optimal transport for which fast solvers are now available (see \cite{merigotmulti}, \cite{levy3d}).

\smallskip

In the present paper, we follow a different approach, based on the so-called entropic regularization which leads to a strictly convex 
problem. The entropic regularization approach, which goes back to Schr\"{o}dinger \cite{Schrodinger31} has deep connections with large deviations \cite{DaG}. It has been extensively analyzed and developed by Mikami \cite{Mikami04} and  L\'eonard \cite{leonard2012schrodinger}, \cite{leonard2013survey}  who in particular proved convergence of Schr\"{o}dinger bridges to optimal transport geodesics as the noise intensity vanishes. It has also proved to be an efficient computational strategy for optimal transport by Cuturi \cite{Cut} who made the connection with the  simple but powerful Sinkhorn scaling algorithm which is equivalent to the famous iterative proportional fitting procedure (see \cite{Csi75}, \cite{Rus95}), also see  \cite{Ben} for various applications to optimal transport. 
 
Our aim in this paper is twofold: first  showing that the entropic regularization leads to tractable simulations. We present in section 6 computations of a non classical  two-dimensional GIF.  
Similar computations were presented recently in \cite{memi} but rely on a different relaxation and different numerical methods. Secondly, we connect this numerical scheme to the Schr\"{o}dinger bridge framework which  involves the relative entropy with respect to the Wiener measure with a small variance parameter.  We shall indeed show, that in the periodic case, $\D=\T^d$, our numerical approach is  exactly the discretized in time counterpart of the entropic interpolation approach developed recently in \cite{leonardBredinger} and reminiscent of Yasue's variational approach for Navier-Stokes equations \cite{yasue}.

\smallskip

The paper is organized as follows. After rewriting the time discretization of Brenier's problem as a multi-marginal optimal transport problem in section \ref{sec-time-discr}, we introduce its entropic regularization in section \ref{sec-entrreg}.  In the periodic case, a detailed comparison with the time-discretization of Bredinger's problem\footnote{we adopt here the terminology of \cite{leonardBredinger} where the name Bredinger is introduced as a contraction of Brenier and Schr\"{o}dinger.} as well as a $\Gamma$-convergence result are given in section \ref{sec-comp-bred}. In section \ref{sec-impl}, Sinkhorn's algorithm is described in the present setting. Finally, section \ref{sec-results} is devoted to numerical results in dimensions one and two.


\section{Time discretization}\label{sec-time-discr}

In what follows, $\D$ is either a bounded convex domain of $\R^d$ or the flat torus, $\D=\R^d/2\pi \Z^d$, we denote by $\dist$ the distance on $\D$, that is the euclidean distance in the convex domain case and in the case of the torus:
\[\dist(x,y):=\inf_{k\in \Z^d} \vert x-y +2 \pi k\vert\, \; \forall (x,y)\in \T^d.\] 
The path space $\Omega=C([0,T], \D)$ is equipped with the topology of uniform convergence and $\PP(\Omega)$ with the corresponding weak $*$ topology. Given $N\in \N$, $N\ge 1$, let $\Tc_N:=\{k\frac{T}{N}, \; k=0, \ldots, N\}$ and consider the time-discretization of Brenier's problem \pref{brerelax}
\begin{equation}\label{brediscr}
\inf_{Q\in \GIF_N(\pi_{0,T}) } \E(Q),
\end{equation}
where 
\[\GIF_N(\pi_{0,T}):=\{Q \in \PP(\Omega) \; : \; {e_t}_\#Q=\Leb, \;  \forall t\in \Tc_N, \; (e_0, e_T)_\# Q=\pi_{0,T}\}.\]
It is well-known (see \cite{brenier1989least}) that both linear problems \pref{brerelax} and \pref{brediscr} admit solutions (which are not unique in general).

\smallskip

The discretized in-time problem \pref{brediscr} can easily be rewritten as an optimal transport problem with multi-marginal constraints as follows. Given $\xx_N:=(x_0, \cdots, x_N)\in \D^{N+1}$, let us denote by $\proj_{0,N}$ and $\proj_k$ the canonical projections:
\[\proj_k(\xx_N)=x_k, \; k=0, \cdots, N, \; \proj_{0,N}(\xx_N)=(x_0, x_N).\]
Defining the cost
\begin{equation}\label{defcn}
c_N(\xx_N):=\frac{N}{2T} \sum_{k=0}^{N-1} \dist^2(x_{k+1}, x_k), \; \forall  \xx_N:=(x_0, \cdots, x_N)\in \D^{N+1}
\end{equation}
and the set of plans 
\begin{equation}  \label{gammaN} 
\Gamma_N(\pi_{0,T}):=  \{\gamma \in \PP(\D^{N+1}) :  {\proj_k}_\# \gamma= \Leb, \; k=0,..., N, \; {\proj_{0,N}}_\# \gamma=\pi_{0,T}\}
\end{equation} 
let us consider the multi-marginal optimal transport problem:
\begin{equation}\label{mmbre}
\inf_{\gamma \in \Gamma_N(\pi_{0,T})} \int_{\D^{N+1}} c_N(\xx_N) \mbox{d} \gamma(\xx_N).
\end{equation}
Setting 
\[P_N(\omega):=(\omega(t))_{t\in \Tc_N},  \; \forall \omega \in \Omega\]
it is clear that $Q\in \PP(\Omega)$ belongs to $\GIF_N(\pi_{0,T})$ if and only if ${P_N}_\# Q$ belongs to $\Gamma_N(\pi_{0,T})$. Moreover, since 
\[c_N(x_0, \cdots, x_N)=\inf \Big\{E(\omega) \; : \;  \omega\in \Omega, \;  P_N(\omega)=(x_0, \cdots, x_N)\Big\},\]
we easily deduce the following:

\begin{lem}\label{equiv1}
Problems \pref{brediscr} and \pref{mmbre} are equivalent in the sense that $Q_N\in \GIF_N(\pi_{0,T})$ solves \pref{brediscr} if and only if ${P_N}_\# Q_N$ solves \pref{mmbre} and 
\[c_N(P_N(\omega))=E(\omega), \mbox{ for $Q_N$-a.e. $\omega$}.\]
\end{lem}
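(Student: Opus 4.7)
The plan is to realize both problems as infima over compatible objects related by the pushforward through the sampling map $P_N$, and to lift each admissible plan $\gamma \in \Gamma_N(\pi_{0,T})$ to a generalized incompressible flow $Q$ by a Borel piecewise-geodesic interpolation.

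First I would record the easy direction. Since $c_N(P_N(\omega)) \le E(\omega)$ for every $\omega \in \Omega$ by the very definition of $c_N$, integrating against any $Q \in \PP(\Omega)$ yields $\int_\Omega E \, dQ \ge \int_{\D^{N+1}} c_N \, d({P_N}_\# Q)$. Moreover, because $\proj_k \circ P_N = e_{kT/N}$ and $\proj_{0,N} \circ P_N = (e_0, e_T)$, the constraints ${e_t}_\# Q = \Leb$ for $t \in \Tc_N$ and $(e_0,e_T)_\# Q = \pi_{0,T}$ translate directly to ${P_N}_\# Q \in \Gamma_N(\pi_{0,T})$. Hence $\inf \pref{brediscr} \ge \inf \pref{mmbre}$.

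The core step is to construct a Borel map $\Psi : \D^{N+1} \to \Omega$ with $P_N \circ \Psi = \id_{\D^{N+1}}$ and $E(\Psi(\xx_N)) = c_N(\xx_N)$ for every $\xx_N$. The natural choice is to let $\Psi(\xx_N)$ be the concatenation of constant-speed minimizing geodesics joining $x_k$ to $x_{k+1}$ on each subinterval $[kT/N, (k+1)T/N]$. When $\D$ is a bounded convex subset of $\R^d$ this is just piecewise affine interpolation and $\Psi$ is continuous, hence trivially Borel. On the torus the minimizing geodesic between two points is not unique when the pair lies on the cut locus; this is the step I expect to require the most care, and I would handle it by applying a Borel measurable selection theorem to the compact-valued multifunction of shortest lifts of $(x_k,x_{k+1})$ to $\R^d$, noting that the set of non-uniqueness has zero measure and so does not affect any of the integrals that follow.

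Once $\Psi$ is at hand, I would push forward: given $\gamma \in \Gamma_N(\pi_{0,T})$, set $Q := \Psi_\# \gamma$. Because $P_N \circ \Psi = \id$, one gets ${P_N}_\# Q = \gamma$, and therefore $Q \in \GIF_N(\pi_{0,T})$, while the change of variables formula yields $\int_\Omega E \, dQ = \int_{\D^{N+1}} c_N \, d\gamma$. This gives the reverse inequality and shows equality of the two infima. The equivalence of optimizers then follows automatically: if $Q_N$ solves \pref{brediscr}, then $\int E\,dQ_N \ge \int c_N \, d({P_N}_\# Q_N)$ must be an equality, forcing $E(\omega) = c_N(P_N(\omega))$ for $Q_N$-a.e. $\omega$, and ${P_N}_\# Q_N$ solves \pref{mmbre}; conversely, applying $\Psi_\#$ to any optimizer of \pref{mmbre} produces an admissible competitor for \pref{brediscr} attaining the common value.
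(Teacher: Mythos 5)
Your proof is correct and follows essentially the route the paper intends: the constraint correspondence ${e_t}_\#Q=\Leb \Leftrightarrow {P_N}_\#Q\in\Gamma_N(\pi_{0,T})$ together with the identity $c_N(\xx_N)=\inf\{E(\omega):P_N(\omega)=\xx_N\}$, made effective by a Borel piecewise-geodesic lifting $\Psi$ with $P_N\circ\Psi=\id$ and $E\circ\Psi=c_N$. One small caveat: your parenthetical that the non-uniqueness set ``has zero measure and so does not affect any of the integrals'' is not a valid justification by itself, since $\gamma\in\Gamma_N(\pi_{0,T})$ need not be absolutely continuous and can concentrate on the cut locus (e.g.\ a coupling with $x_{k+1}=x_k+\pi e_1$ on $\T^d$); this is harmless only because the measurable selection you already invoke is defined everywhere and every minimizing geodesic yields the same energy $\frac{N}{2T}\dist^2(x_k,x_{k+1})$, so the value of $E\circ\Psi$ is independent of the selection.
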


Finally, let us emphasize that \pref{brediscr} approximates \pref{brerelax} in the sense of $\Gamma$-convergence. Let us denote by  $\chi_{\GIF_N(\pi_{0,T})}$ and $ \chi_{\GIF(\pi_{0,T})}$ the characteristic function of $\GIF_N(\pi_{0,T})$ and $\GIF(\pi_{0,T})$ respectively i.e. for $Q\in \PP(\Omega)$, 
\[\begin{split}
\chi_{\GIF_N(\pi_{0,T})}(Q)& =\begin{cases} 0 \mbox{ if $Q \in \GIF_N(\pi_{0,T})$,}\\ +\infty \mbox{ otherwise, }\end{cases}, \\
 \chi_{\GIF(\pi_{0,T})}(Q) &=\begin{cases} 0 \mbox{ if $Q \in \GIF(\pi_{0,T}),$}\\ +\infty \mbox{ otherwise, }\end{cases},
\end{split}\] 
we indeed have (we refer to chapter 6 of \cite{thesislulu} for a proof):

\begin{prop}\label{gammacvdiscr}
The sequence of functionals on $\PP(\Omega)$ (endowed with the weak $^*$ topology),  $\E+ \chi_{\GIF_N(\pi_{0,T})}$ $\Gamma$-converges as $N\to +\infty$ to $\E+ \chi_{\GIF(\pi_{0,T})}$.
\end{prop}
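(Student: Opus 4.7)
I would prove the $\Gamma$-convergence by checking the two standard conditions separately, and I would exploit the fact that the time-discretized constraints are \emph{weaker} than the continuous one, i.e.\ $\GIF(\pi_{0,T}) \subset \GIF_N(\pi_{0,T})$ for every $N$, which makes the recovery sequence essentially free.

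\textbf{Recovery sequence ($\limsup$ inequality).} Fix $Q \in \PP(\Omega)$. If $Q \notin \GIF(\pi_{0,T})$ the target value is $+\infty$ and there is nothing to show. Otherwise $Q \in \GIF(\pi_{0,T})$ satisfies $(e_t)_\# Q = \Leb$ for \emph{all} $t \in [0,T]$ and $(e_0,e_T)_\# Q = \pi_{0,T}$, so in particular $Q \in \GIF_N(\pi_{0,T})$ for every $N$. The constant sequence $Q_N := Q$ is therefore admissible, converges trivially, and has $\E(Q_N) = \E(Q)$, giving the required limsup bound.

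\textbf{Liminf inequality.} Let $Q_N \weakto^* Q$ in $\PP(\Omega)$. I may assume $\liminf_N \E(Q_N) + \chi_{\GIF_N(\pi_{0,T})}(Q_N) < +\infty$, hence (up to a subsequence) $Q_N \in \GIF_N(\pi_{0,T})$ with $\E(Q_N)$ bounded. Two things must be shown: (i) $\E(Q) \le \liminf_N \E(Q_N)$ and (ii) $Q \in \GIF(\pi_{0,T})$. Part (i) is standard: the integrand $E$ defined in \pref{defkine} is nonnegative and weak$^*$ lower semicontinuous on $\Omega$, so $\E$ is weak$^*$ lsc on $\PP(\Omega)$. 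Since $(e_0,e_T)$ is continuous and $(e_0,e_T)_\# Q_N = \pi_{0,T}$ for every $N$, passing to the limit gives $(e_0,e_T)_\# Q = \pi_{0,T}$. The only real content is showing that the incompressibility constraint extends from the dense set $\bigcup_N \Tc_N$ to all $t\in [0,T]$.

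\textbf{Extending the marginal constraint (main obstacle).} The key observation is that bounded action forces equicontinuity in $t$ of the curve $t \mapsto (e_t)_\# Q_N$. Indeed, for $Q_N$-a.e.\ $\omega$, $E(\omega) < +\infty$ and Cauchy-Schwarz gives $\dist(\omega(t),\omega(s)) \le \sqrt{2 E(\omega)\,|t-s|}$. For any Lipschitz test function $\varphi:\D\to\R$ and any $t \in [0,T]$, pick $t_N \in \Tc_N$ with $|t-t_N| \le T/N$. Then
\begin{equation*}
\Bigl|\int_{\Omega}\varphi(e_t(\omega))\,dQ_N - \int_{\Omega}\varphi(e_{t_N}(\omega))\,dQ_N\Bigr|
 \le \Lip(\varphi)\int_{\Omega}\sqrt{2E(\omega)|t-t_N|}\,dQ_N(\omega),
\end{equation*}
and Jensen bounds the right-hand side by $\Lip(\varphi)\sqrt{2|t-t_N|\,\E(Q_N)}$, which tends to $0$ uniformly in $t$. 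Since $(e_{t_N})_\# Q_N = \Leb$ and $(e_t)_\# Q_N \weakto^* (e_t)_\# Q$, letting $N\to\infty$ yields $\int \varphi\, d(e_t)_\# Q = \int \varphi\, d\Leb$. Lipschitz functions being dense in $C(\D)$, this gives $(e_t)_\# Q = \Leb$ for every $t \in [0,T]$, whence $Q \in \GIF(\pi_{0,T})$ and the proof is complete.

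I expect the equicontinuity argument above to be the only delicate point; everything else is soft functional analysis once one notices the inclusion $\GIF(\pi_{0,T}) \subset \GIF_N(\pi_{0,T})$.
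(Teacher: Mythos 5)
Your proof is correct, and it follows the standard route one would expect here (the paper itself does not reproduce a proof but defers to chapter~6 of the cited thesis): the inclusion $\GIF(\pi_{0,T})\subset\GIF_N(\pi_{0,T})$ makes the constant recovery sequence work, and the only substantive step is exactly the one you isolate, namely propagating the marginal constraint from $\bigcup_N \Tc_N$ to all of $[0,T]$ via the equicontinuity estimate $\dist(\omega(t),\omega(s))\le\sqrt{2E(\omega)|t-s|}$ combined with the uniform action bound and Jensen. All the auxiliary points you invoke (lower semicontinuity of $\E$ from the nonnegative lsc integrand $E$, weak $*$ stability of $(e_0,e_T)_\#$, density of Lipschitz functions) are sound, so I see no gap.
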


Since $\E$ has relatively compact sublevel sets in $\Omega$, the previous result in particular implies convergence of minimizers of \pref{brediscr} to minimizers of \pref{brerelax}.



\section{Entropy minimization}\label{sec-entrreg}

The equivalent linear programming problems \pref{mmbre} and \pref{brediscr} are extremely costly to solve numerically and a natural strategy, which has received a lot of attention recently, is to approximate these problems by  strictly convex ones by adding an entropic penalization.  First, let us  set a few notations, given a Polish space $X$, and two probability measures on $X$, $q$ and  $r$ the relative entropy of $q$ with respect to $r$ (a.k.a. Kullback-Leibler divergence) is given  by 
\[\H(q\vert r):=\begin{cases} \int_X \log\Big( \frac{ d q}{d r } \Big) \mbox{d} q \mbox{ if $q \ll r $}\\ + \infty \mbox{ otherwise}   \end{cases}\]
where $\frac{ d q}{d r} $ stands for the Radon-Nikodym derivative of $q$ with respect to $r$. If $X=\D^m$ and $\mu\in \PP(\D^m)$ we shall simply denote by $\Ent(\mu)$ the relative entropy of $\mu$ with respect to $\Leb^{ \otimes m}$; slightly abusing notation, when  $\mu \ll \Leb^{\otimes m}$ we shall identify $\mu$ with its density and simply write
\[\Ent(\mu)=  \frac{1}{\vert \D \vert^{m}}  \int_{\D^m} \log(\mu(x_1, \cdots, x_m)) \mu(x_1, \cdots, x_m) \mbox{d} x_1 \cdots \mbox{d} x_m\]
where $\vert \D \vert$ denotes the Lebesgue measure of $\D$.
\smallskip

A first  way to perform an entropic regularization of \pref{mmbre} is, given a small parameter $\eps>0$, to replace \pref{mmbre} by 
\begin{equation}\label{entrnaive}
\inf_{\gamma \in \Gamma_N(\pi_{0,T})}   \int_{\D^{N+1}} c_N \mbox{ d} \gamma+ \eps \Ent(\gamma)
\end{equation} 
Note that for the previous problem to make sense, i.e. for the existence of at least one $\gamma\in \Gamma_N(\pi_{0,T})$ such that $\Ent(\gamma)<+\infty$ it is necessary and sufficient that
\begin{equation}\label{conditionentfin}
\Ent(\pi_{0,T}) <+\infty.
\end{equation}
This, in particular, rules out the case\footnote{However, as we shall see, one way to overcome this problem is by penalizing in the cost the condition $x_N=X_T(x_0)$.} where $\pi_{0,T}=(\id, X_T)_\# \Leb$ with $X_T\in \Sdif$. Defining the Gibbs measure on $\D^{N+1}$ associated to the cost $c_N$:
\[\eta_{N, \eps} (\xx_N):= \Lambda_{N, \eps} \exp\Big(-\frac{c_N(\xx_N)}{\eps}\Big) \]
where $\Lambda_{N, \eps}$ is the normalizing constant which makes $\eta_{N, \eps}$ be a probability density, \pref{entrnaive} can be rewritten as the Kullback-Leibler projection problem of $\eta_{N, \eps}$ onto  $\Gamma_N(\pi_{0,T})$:
\begin{equation}\label{entrnaivekl}
\inf_{\gamma \in \Gamma_N(\pi_{0,T})}   \eps \H(\gamma \vert \eta_{N, \eps}). 
\end{equation} 

It will follow from the proof of Theorem \ref{gammaconveps} (also see remark \ref{gammaconvs}) that in the periodic case $\D=\T^d$, the finite entropy condition \pref{conditionentfin} guarantees the $\Gamma$-convergence of \pref{entrnaive} to \pref{mmbre} as $\eps\to 0$.


\section{Comparison with Bredinger in the periodic case}\label{sec-comp-bred}

\subsection{Bredinger's problem and its time discretization}

Another way to approximate the problem is to introduce some noise at the level of the path space i.e. in \pref{brerelax} as was done recently in \cite{leonardBredinger}\footnote{in connection with Navier-Stokes.}  as follows. Throughout this section, we assume that $\D=\T^d:=\R^d/2\pi \Z^d$. Assuming \pref{conditionentfin} and given a small parameter $\eps>0$ as before, let $R_\eps\in \PP(\Omega)$ be defined by
\begin{equation}\label{defRefep}
R_\eps=  \frac{1}{(2\pi)^d} \int_{\T^d} \Law(x+\sqrt{\eps} B_\eps) \mbox{d}x
\end{equation}
where $B_\eps$ is the standard Brownian motion starting at $0$ (that is the Markov process whose generator  is $\frac{1}{2} \Delta$) on $\frac{1}{\sqrt{\eps}}\T^d$. Following \cite{leonardBredinger}, we shall call 
\begin{equation}\label{bred}
\inf_{Q\in \GIF(\pi_{0,T})} \H(Q\vert R_\eps)
\end{equation}
the Bredinger problem (with variance parameter $\eps$). Let us now discretize in time the Bredinger problem \pref{bred} in a similar way as we deduced \pref{brediscr} from \pref{brerelax} i.e. consider
\begin{equation}\label{breddiscr}
\inf_{Q\in \GIF_N(\pi_{0,T})} \H(Q\vert R_\eps).
\end{equation}
It is worth noting here that $R_\eps\in \GIF$. Following L\'eonard \cite{leonard2012schrodinger}, one can reduce \pref{breddiscr} to an entropy minimization problem over $\Gamma_N(\pi_{0,T})$ as follows. Let us set 
\begin{equation}\label{deftet}
\theta_{N, \eps}:={P_N }_\# R_\eps
\end{equation}
and disintegrate $R_\eps$ with respect to $\theta_{N, \eps}$ as 
\[R_{\eps} = \int_{{(\T^d)}^{N+1}} R_\eps^{\xx_N} \mbox{d} \theta_{N, \eps}(\xx_N),\]
so that $R_\eps^{\xx_N}$ is the law of a Brownian bridge i.e. the law of a Brownian path conditional to the fact that its values at times in $\Tc_N$ are given by $\xx_N$. 
In a similar way, given $Q\in \GIF_N(\pi_{0,T})$, setting $\gamma:={P_N}_\# Q \in \Gamma_{N}(\pi_{0,T})$ and disintegrating $Q$ with respect to $\gamma$:
\[Q = \int_{{(\T^d)}^{N+1}} Q^{\xx_N} \mbox{d} \gamma (\xx_N),\]
we have
\[\H(Q\vert R_\eps)=\H(\gamma\vert \theta_{N, \eps})+ \int_{(\T^d)^{N+1}} \H(Q^{\xx_N}\vert R_{\eps}^{\xx_N}) \mbox{d} \gamma(\xx_N) \ge \H(\gamma\vert \theta_{N, \eps}) \]
with equality if and only if $Q^{\xx_N}=R_{\eps}^{\xx_N}$ for $\gamma$-almost every $\xx_N$ . Hence we get the following entropic analogue of Lemma \ref{equiv1}: 

\begin{prop}\label{equiv2}
Let $Q^\star\in \GIF_N(\pi_{0,T})$ then $Q^\star$ solves \pref{breddiscr} if and only if $\gamma^\star:={P_N}_\# Q $ solves
\begin{equation}
\label{mmkent}
\inf_{\gamma \in \Gamma_N(\pi_{0,T})} \H(\gamma \vert \theta_{N, \eps}) 
\end{equation}
and $Q^\star$ disintegrates with respect to $\gamma^\star:={P_N}_\# Q^\star $  as
\begin{equation}
\label{same-bridge}
Q^\star = \int_{{\T^d}^{N+1}} R_\eps^{\xx_N} \mbox{d} \gamma^\star (\xx_N).
\end{equation}
\end{prop}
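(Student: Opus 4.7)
My strategy is to reduce everything to the chain rule for relative entropy under the pushforward by $P_N: \Omega \to (\T^d)^{N+1}$, which the authors essentially record in the paragraph preceding the statement. The first step is to justify rigorously the additivity identity
\[\H(Q\vert R_\eps)=\H(\gamma\vert \theta_{N, \eps})+ \int_{(\T^d)^{N+1}} \H(Q^{\xx_N}\vert R_{\eps}^{\xx_N}) \, \mbox{d} \gamma(\xx_N)\]
for every $Q \in \GIF_N(\pi_{0,T})$ with $\gamma := {P_N}_\# Q$. Assuming $Q \ll R_\eps$ (otherwise both sides are $+\infty$), one writes the density $\mbox{d}Q/\mbox{d}R_\eps$ as the product of $(\mbox{d}\gamma/\mbox{d}\theta_{N,\eps})\circ P_N$ and the conditional densities $\mbox{d}Q^{\xx_N}/\mbox{d}R_{\eps}^{\xx_N}$, takes logarithms, and applies Fubini. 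Nonnegativity of each $\H(Q^{\xx_N}\vert R_{\eps}^{\xx_N})$ then yields $\H(Q\vert R_\eps) \ge \H(\gamma\vert \theta_{N, \eps})$, with equality if and only if $Q^{\xx_N}=R_{\eps}^{\xx_N}$ for $\gamma$-almost every $\xx_N$, i.e.\ if and only if $Q = \int R_\eps^{\xx_N} \, \mbox{d}\gamma(\xx_N)$.

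The second step is the converse construction. Given any $\gamma \in \Gamma_N(\pi_{0,T})$, I would set $Q_\gamma := \int R_\eps^{\xx_N} \, \mbox{d}\gamma(\xx_N)$. Since each $R_\eps^{\xx_N}$ is concentrated on paths $\omega$ with $P_N(\omega)=\xx_N$, one has ${P_N}_\# Q_\gamma = \gamma$, hence ${e_{kT/N}}_\# Q_\gamma = {\proj_k}_\# \gamma = \Leb$ for every $k=0,\dots,N$ and $(e_0, e_T)_\# Q_\gamma = {\proj_{0,N}}_\# \gamma = \pi_{0,T}$; thus $Q_\gamma \in \GIF_N(\pi_{0,T})$. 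Substituting $Q_\gamma$ into the identity from step one makes the conditional integral vanish and gives $\H(Q_\gamma\vert R_\eps) = \H(\gamma\vert \theta_{N,\eps})$.

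Combining the two steps, the infima in \pref{breddiscr} and \pref{mmkent} are equal. The forward direction of the proposition then reads: if $Q^\star$ solves \pref{breddiscr}, step one yields $\H(\gamma^\star\vert \theta_{N,\eps}) \le \H(Q^\star\vert R_\eps)$, and the right-hand side equals the common infimum by step two, so $\gamma^\star$ solves \pref{mmkent}; equality in the chain rule simultaneously forces the representation \pref{same-bridge}. Conversely, if $\gamma^\star$ solves \pref{mmkent} and $Q^\star$ has the form \pref{same-bridge}, step two gives $\H(Q^\star\vert R_\eps) = \H(\gamma^\star\vert \theta_{N,\eps})$, which is again the common infimum, so $Q^\star$ solves \pref{breddiscr}. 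The main (and essentially only) delicate point is the chain rule identity in step one; once the disintegration $R_\eps = \int R_\eps^{\xx_N} \, \mbox{d}\theta_{N,\eps}(\xx_N)$ is in place --- which relies only on the Markov property of Brownian motion on $\T^d$ --- everything else is bookkeeping.
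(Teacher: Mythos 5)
Your proof is correct and follows essentially the same route as the paper's: both arguments hinge on the additive disintegration formula $\H(Q\vert R_\eps)=\H(\gamma\vert \theta_{N,\eps})+\int \H(Q^{\xx_N}\vert R_\eps^{\xx_N})\,\mbox{d}\gamma(\xx_N)$ together with the fact that the conditional term vanishes exactly when $Q^{\xx_N}=R_\eps^{\xx_N}$ for $\gamma$-a.e.\ $\xx_N$, the paper phrasing this as a nested infimum over $\gamma$ and then over $Q$ with ${P_N}_\# Q=\gamma$ while you phrase it as a pair of inequalities plus the explicit construction $Q_\gamma$. Your version is if anything slightly more complete, since you spell out the justification of the chain rule and the verification that $Q_\gamma\in\GIF_N(\pi_{0,T})$, and you do not need the uniqueness-by-strict-convexity remark the paper opens with.
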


\begin{proof}
 By strict convexity,  \pref{breddiscr} and \pref{mmkent}  admit at most one solution. Thanks to 
\begin{equation}
\label{additive-formula}
\H(Q\vert R_\eps)=\H(\gamma\vert \theta_{N, \eps})+ \int_{(\T^d)^{N+1}} \H(Q^{\xx_N}\vert R_{\eps}^{\xx_N}) \mbox{d} \gamma(\xx_N)
\end{equation}
where $\gamma:={P_N}_\# Q$ and the fact that $Q\in\GIF_N(\pi_{0,T})$ if and only if $\gamma:={P_N}_\# Q\in \Gamma_N(\pi_{0,T})$
the minimization problem \pref{breddiscr} can be rewritten as
\begin{equation}
\inf_{ \gamma\in\Gamma_N(\pi_{0,T})}     \left\{ \inf  \left\{\H(Q|R_\eps)\;|\;Q\in\PP(\Om),\;{P_N}_\#Q=\gamma  \right\} \right\}.
\end{equation}
With (\ref{additive-formula}), the inner minimization is uniquely solved when $Q^{\xx_N}=R_{\eps}^{\xx_N}$ for $\g-$almost every $\xx_N\in{\T^d}^{N+1}$ since $\H(Q^{\xx_N}\vert R_{\eps}^{\xx_N})=0$ is the minimal
value of the relative entropy. Therefore, for each $\gamma\in\Gamma_N(\pi_{0,T})$ we have
\[  \inf  \left\{\H(Q|R_\eps)\;|\;Q\in\PP(\Om),\;{P_N}_\#Q=\gamma  \right\}
=\H(Q^\gamma|R_{\eps}^{\xx_N})=\H(\gamma|\theta_{N,\eps}),
\]
where 
\[ Q^\gamma=\int_{{(\T^d)}^{N+1}}R_{\eps}^{\xx_N} \mbox{d} \gamma(\xx_N), \]
and the solution of \pref{breddiscr} is
\[Q=Q^{\gamma^\star} \]
where $\g^\star$ is the unique solution of \pref{mmkent}.
\end{proof}

Now, we see that \pref{mmkent} leads to another entropy regularized optimal transport problem, which can equivalently be rewritten as
\begin{equation}\label{bresinkh}
\inf_{\gamma \in \Gamma_N(\pi_{0,T})}   \int_{(\T^d)^{N+1}} c_{N, \eps} \mbox{ d} \gamma+ \eps \Ent(\gamma) \mbox{ with } c_{N, \eps}:=-\eps \log(\theta_{N, \eps}).
\end{equation}

The difference between the \emph{naive} regularization \pref{entrnaive} and \pref{bresinkh} is that the cost which comes from the discretization of Bredinger's problem $c_{N, \eps}$ is related to the heat kernel and not directly to the initial quadratic distance cost. We shall compare both costs and give a convergence  result in the next paragraph. We conclude this section by summarizing in the following table all the variational models we have seen so far.


\TabOne{

\hline 
 \textbf{Brenier's problem} \\\hline
 \[\inf_{Q\in\GIF(\pi_{0,T})} \E(Q)\]
 where \[\E(Q)\ddd\int_{\Om}E(\omega)\mbox{d} Q(\omega).\]\\
\hline 
 \textbf{Discrete Brenier problem} \\\hline
 \[\inf_{Q\in\GIF_N(\pi_{0,T})} \E(Q)   \Longleftrightarrow \inf_{\gamma \in \Gamma_N(\pi_{0,T})} \int_{(\T^d)^{N+1}} c_N(\xx_N) \mbox{d} \gamma(\xx_N).\]
\\
\hline 
 \textbf{Bredinger's problem} \\\hline  
 \[\inf_{Q\in \GIF(\pi_{0,T})} \H(Q\vert R_\eps) \]
\\
\hline 
\textbf{Discrete Bredinger problem} \\\hline
\[\inf_{Q\in \GIF_N(\pi_{0,T})} \H(Q\vert R_\eps) \Longleftrightarrow \inf_{\gamma \in \Gamma_N(\pi_{0,T})} \H(\gamma \vert \theta_{N, \eps}). \]
\\
\hline
}


\subsection{Convergence as noise vanishes }\label{sec-conveps}

Our goal now is to establish (for fixed $N$) a convergence result as $\eps\to 0$. In the first place, one needs to  connect  $c_{N, \eps}$ and $c_N$ which can be done thanks to classical Gaussian estimates for the heat kernel. 

First we observe that the kernel $\theta_{N, \eps}$ from \pref{defRefep}-\pref{deftet} can be computed as follows. First let us denote by $p_t$ the heat kernel on $\R^d$:
\begin{equation}\label{hk}
p_t(z):=\frac{1}{(2\pi t)^{\frac{d}{2}} }\exp\Big(-\frac{\vert z \vert^2}{2t}\Big), \; t>0, \; z\in \R^d,
\end{equation}
the heat-kernel on $\T^d$ is obtained by its $2\pi \Z^d$ periodization:
\begin{equation}
g_t(x):= (2\pi)^d \sum_{k\in \Z^d} p_t(x+ 2 k\pi), \;  x\in \T^d \simeq [-\pi, \pi]^d,
\end{equation}
and the heat kernel on  $\frac{1}{\sqrt \eps}\T^d$ is likewise given by 
\begin{equation}
g_t^\eps(x):= (2\pi)^d \sum_{k\in \Z^d} p_t(x+ \frac{2 k\pi}{\sqrt{\eps}}), \;  x\in  \frac{1}{\sqrt \eps}\T^d.
\end{equation}
Since 
\[\eps^{-\frac{d}{2}} g_t^\eps  \Big(\frac{x}{\sqrt{\eps}}\Big)=g_{\eps t} (x), \;  x\in \T^d,\]
we have  
\[\theta_{N, \eps}(x_0, \cdots, x_N)=  \eps^{-\frac{dN}{2}} \prod_{k=0}^{N-1} g^\eps_{\frac{ T}{N}}\Big(\frac{x_{k+1}-x_k}{\sqrt{\eps}}\Big)  = \prod_{k=0}^{N-1} g_{\frac{\eps T}{N}}(x_{k+1}-x_k)\] 
so that
\[c_{N,\eps} (x_0, \cdots, x_N)=-\eps \sum_{k=0}^{N-1} \log\Big(g_{\frac{\eps T}{N}}(x_{k+1}-x_k) \Big).\]
To compare this expression with the minimal quadratic cost $c_N$, a natural tool is the following Gaussian estimate for the heat kernel (see the self-contained notes of \cite{mah} for the case of the torus):

\begin{thm}\label{gaussianest}
The heat kernel $g_t$ on the torus satisfies for every $t>0$ and $x\in \T^d$:
\[\frac{ \lambda }{(2\pi t)^{\frac{d}{2}} }\exp\Big(-\frac{\dist^2(x,0)}{2t}\Big)     \le    g_t(x) \le   \frac{\Lambda}{(2\pi t)^{\frac{d}{2}} }\exp\Big(-\frac{\dist^2(x,0)}{2t}\Big)\]

where $\lambda$ and $\Lambda$ are two positive constants (depending on $d$).

\end{thm}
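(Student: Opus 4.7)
The plan is to exploit the explicit periodization formula $g_t(x) = (2\pi)^d \sum_{k \in \Z^d} p_t(x + 2\pi k)$ directly. The lower bound is essentially immediate: choose $k^\ast \in \argmin_{k \in \Z^d} |x + 2\pi k|$, so that $|x + 2\pi k^\ast| = \dist(x,0)$, and discard every other (nonnegative) term in the periodization. This yields
\[
g_t(x) \ge (2\pi)^d p_t(x + 2\pi k^\ast) = \frac{(2\pi)^d}{(2\pi t)^{d/2}} \exp\!\bigl(-\dist^2(x,0)/(2t)\bigr),
\]
so $\lambda = (2\pi)^d$ works, uniformly in $t$ and $x$.

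For the upper bound, I would use periodicity to pick a representative $x \in [-\pi,\pi]^d$ for which $k^\ast = 0$ is a closest image, so that $\dist(x,0) = |x|$, $|x| \le \pi\sqrt{d}$, and $|x + 2\pi k| \ge |x|$ for every $k \in \Z^d$. Factoring out the $k=0$ contribution,
\[
g_t(x) = \frac{(2\pi)^d}{(2\pi t)^{d/2}} \exp\!\bigl(-|x|^2/(2t)\bigr) \sum_{k \in \Z^d} \exp\!\Bigl(-\frac{|x+2\pi k|^2 - |x|^2}{2t}\Bigr),
\]
so it suffices to bound the sum by a $d$-dependent constant. Expanding $|x+2\pi k|^2 - |x|^2 = 4\pi^2 |k|^2 + 4\pi k\cdot x$ and using Cauchy--Schwarz together with $|x| \le \pi\sqrt{d}$ gives
\[
|x+2\pi k|^2 - |x|^2 \;\ge\; 4\pi^2 |k|\bigl(|k| - \sqrt{d}\bigr),
\]
which is $\ge 2\pi^2 |k|^2$ as soon as $|k| \ge 2\sqrt{d}$. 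The tail $\sum_{|k|\ge 2\sqrt d}\exp(-\pi^2|k|^2/t)$ is then dominated by $\sum_{k}\exp(-\pi^2|k|^2/T_0)$ for $t \le T_0$, which is finite, while the remaining indices $0 < |k| < 2\sqrt d$ are finite in number (bounded in terms of $d$) and each give a term $\le 1$. Collecting these contributions supplies the constant $\Lambda$.

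The main subtlety is the non-uniqueness of the closest lattice image when $x$ lies on or near the boundary of the Voronoi cell: several $k \neq 0$ can make $|x+2\pi k|$ essentially equal to $|x|$, so the gap estimate $|x+2\pi k|^2 - |x|^2 \gtrsim |k|^2$ fails for these indices. The crucial point is that only boundedly many (in terms of $d$) indices exhibit this near-collision behaviour, so their total contribution is a $d$-dependent \emph{constant} rather than a $t$-dependent quantity; the genuine Gaussian decay takes over for $|k|$ large. Note that in the application to Section~\ref{sec-conveps} only the regime $t = \varepsilon T/N \to 0$ matters, so the implicit restriction to a bounded range of $t$ in the tail estimate is harmless.
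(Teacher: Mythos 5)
The paper itself offers no proof of Theorem~\ref{gaussianest} --- it only points to the notes \cite{mah} --- so your direct periodization argument is a self-contained alternative, and it is correct. The lower bound with $\lambda=(2\pi)^d$ is immediate as you say. For the upper bound, the reduction to a representative $x\in[-\pi,\pi]^d$ is legitimate (coordinatewise, $|x_j+2\pi k_j|\ge 2\pi|k_j|-\pi\ge|x_j|$ whenever $k_j\neq 0$, so $k=0$ is indeed a closest image and every summand is $\le 1$), the near-collision indices $0<|k|<2\sqrt d$ contribute at most a $d$-dependent constant, and the gap estimate $|x+2\pi k|^2-|x|^2\ge 2\pi^2|k|^2$ for $|k|\ge 2\sqrt d$ gives a summable Gaussian tail once $t$ is bounded above. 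The one point worth stressing is that the restriction $t\le T_0$ you introduce is not an artefact of your method but a necessary correction to the statement as printed: as $t\to\infty$ the kernel equilibrates, $g_t(x)\to 1$ pointwise (indeed $g_t(0)=(2\pi)^d(2\pi t)^{-d/2}\sum_{k\in\Z^d}e^{-2\pi^2|k|^2/t}$ and the sum grows like $(t/(2\pi))^{d/2}$), whereas the claimed upper bound $\Lambda(2\pi t)^{-d/2}\exp(-\dist^2(x,0)/(2t))$ tends to $0$; hence no $t$-independent $\Lambda$ can work on all of $(0,\infty)$. The theorem should be read as valid for $t$ in a bounded interval $(0,T_0]$ with $\Lambda$ depending on $d$ and $T_0$, which --- as you correctly observe --- is exactly the regime $t=\eps T/N\to 0$ used in Section~\ref{sec-conveps}, so nothing in the paper is affected.
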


Taking logarithms and summing over $k$ we thus obtain
\[-N\eps \log(\Lambda)+\frac{dN \eps}{2} \log\Big(\frac{2 \pi \eps T}{N} \Big) \le   c_{N, \eps}-c_N \le  -N\eps \log(\lambda)+\frac{dN \eps}{2} \log\Big(\frac{2 \pi \eps T}{N} \Big)\]
i.e. there is a constant $M$ such that, for small enough $\eps$, one has
\begin{equation}\label{convergencecost}
\vert c_N -c_{N, \eps} \vert \le M (\eps N \vert \log(\frac{\eps T}{N})\vert). 
\end{equation}


From \pref{convergencecost},  we can easily deduce a $\Gamma$-convergence result as $\eps\to 0$ (and $N$ fixed) between \pref{mmbre} (equivalent to \pref{brediscr}) and its entropic regularization \`a la Bredinger \pref{bresinkh} (equivalent to \pref{breddiscr}). For $\gamma\in \PP((\T^d)^{N+1})$, let us denote by $\chi_{\Gamma_N(\pi_{0,T})}$ the characteristic function of $\Gamma_N(\pi_{0,T})$ i.e. 
\[\chi_{\Gamma_N(\pi_{0,T})}(\gamma)=\begin{cases} 0 \mbox{ if $\gamma \in \Gamma_N(\pi_{0,T})$}\\ +\infty \mbox{ otherwise }\end{cases}\]
and define the functionals to be minimized in \pref{mmbre} and \pref{bresinkh} respectively
\[J_N(\gamma):=\int_{(\T^d)^{N+1}} c_N \mbox{d} \gamma+ \chi_{\Gamma_N(\pi_{0,T})}(\gamma),\]
and
\[
J_{N, \eps}(\gamma):=\int_{(\T^d)^{N+1}} c_{N, \eps} \mbox{d} \gamma+  \eps \Ent(\gamma)+\chi_{\Gamma_N(\pi_{0,T})}(\gamma)
= \eps \H(\gamma\vert \theta_{N, \eps})+\chi_{\Gamma_N(\pi_{0,T})}(\gamma). 
\]

We then have

\begin{thm}\label{gammaconveps}
Let us assume the finite entropy condition \pref{conditionentfin}, then $J_{N, \eps}$, $\Gamma$-converges to $J_N$ as $\eps \to 0^+$ for   the weak $*$ topology of $\PP((\T^d)^{N+1})$. 
\end{thm}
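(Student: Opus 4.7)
The plan is the usual $\Gamma$-convergence two-step, relying on a careful handling of the entropy penalization at the recovery stage. It is most convenient to use the form
\[J_{N,\eps}(\gamma) = \int_{(\T^d)^{N+1}} c_{N,\eps}\,\mbox{d}\gamma + \eps\,\Ent(\gamma) + \chi_{\Gamma_N(\pi_{0,T})}(\gamma),\]
which separates the three effects cleanly. Three preliminary facts will be used throughout: $c_N$ is continuous and bounded on the compact space $(\T^d)^{N+1}$; by the Gaussian estimate \pref{convergencecost}, $\|c_{N,\eps}-c_N\|_\infty\to 0$; and $\Ent(\cdot)\ge 0$ since $\Leb^{\otimes(N+1)}$ is a probability measure (Jensen). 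Moreover $\Gamma_N(\pi_{0,T})$ is closed for the weak $\ast$ topology, as each marginal constraint is.

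For the liminf inequality, take $\gamma_\eps\weakstarto\gamma$; one may assume $J_{N,\eps}(\gamma_\eps)<+\infty$ along a subsequence, so $\gamma_\eps\in\Gamma_N(\pi_{0,T})$ and, by weak closedness, $\gamma\in\Gamma_N(\pi_{0,T})$. The uniform closeness $\|c_{N,\eps}-c_N\|_\infty\to 0$ combined with narrow convergence against the continuous bounded $c_N$ yields $\int c_{N,\eps}\,\mbox{d}\gamma_\eps\to \int c_N\,\mbox{d}\gamma$; the non-negative entropy term is simply discarded, giving $\liminf J_{N,\eps}(\gamma_\eps)\ge J_N(\gamma)$.

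The recovery sequence is the real work. Fix $\gamma\in\Gamma_N(\pi_{0,T})$ (otherwise nothing to prove). Since $J_{N,\eps}<+\infty$ demands absolute continuity, I cannot take $\gamma_\eps=\gamma$ when $\gamma$ is singular; the idea is to mollify $\gamma$ \emph{only in the intermediate coordinates} $(x_1,\ldots,x_{N-1})$, so as to preserve every marginal constraint exactly. Let $k_\delta$ be a smooth mollifier on $\T^d$ with $\int k_\delta=1$ and $\|k_\delta\|_\infty\le C\delta^{-d}$, and define
\[\gamma_\delta(dx_0,\ldots,dx_N) := \int \gamma(dx_0,dy_1,\ldots,dy_{N-1},dx_N)\,\prod_{k=1}^{N-1}k_\delta(x_k-y_k)\,dx_k.\]
Then $(\proj_{0,N})_\#\gamma_\delta=\pi_{0,T}$ by construction, while for $1\le k\le N-1$ convolution leaves the uniform law invariant, $(\proj_k)_\#\gamma_\delta = \Leb\ast k_\delta = \Leb$, so $\gamma_\delta\in\Gamma_N(\pi_{0,T})$. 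The assumption $\Ent(\pi_{0,T})<+\infty$ guarantees that $\pi_{0,T}$, and hence $\gamma_\delta$, is absolutely continuous; the chain rule for relative entropy along the disintegration over $(x_0,x_N)$, combined with the bound $\|k_\delta\|_\infty\le C\delta^{-d}$ on the conditional densities, gives the controlled-growth estimate
\[\Ent(\gamma_\delta)\le \Ent(\pi_{0,T}) + d(N-1)\log(1/\delta) + C'.\]

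It remains to choose $\delta(\eps)\to 0$ satisfying $\eps\log(1/\delta(\eps))\to 0$, for instance $\delta(\eps)=\sqrt{\eps}$, and to set $\gamma_\eps:=\gamma_{\delta(\eps)}$. Then $\gamma_\eps\weakstarto\gamma$ by a standard mollification argument, $\eps\,\Ent(\gamma_\eps)\to 0$, and $\int c_{N,\eps}\,\mbox{d}\gamma_\eps\to\int c_N\,\mbox{d}\gamma$ by the preliminary facts, delivering $\limsup J_{N,\eps}(\gamma_\eps)\le J_N(\gamma)$. The principal obstacle in the whole argument is engineering approximants of $\gamma$ that simultaneously (a) respect the full set of $N+1$ marginal constraints, and (b) are absolutely continuous with entropy growing more slowly than $1/\eps$; product mollification restricted to the $N-1$ intermediate coordinates threads the needle, and the finite-entropy hypothesis on $\pi_{0,T}$ is precisely what is needed to enter the absolutely continuous regime in the first place.
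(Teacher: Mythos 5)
Your proof is correct and follows essentially the same route as the paper's: the liminf inequality via the uniform estimate \pref{convergencecost} plus the lower bound on the entropy, and the recovery sequence by mollifying only the intermediate coordinates in the disintegration of $\gamma$ over $\pi_{0,T}$, with the $O(\log(1/\delta))$ entropy growth killed by the factor $\eps$. The only (cosmetic) difference is that the paper convolves with the heat kernel $g_\eps$ itself and bounds $\Ent(\gamma_\eps^{x_0,x_N})$ by convexity together with the Gaussian estimate of Theorem \ref{gaussianest}, whereas you use a generic mollifier at scale $\delta(\eps)=\sqrt{\eps}$ and a sup-norm bound on the conditional densities.
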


\begin{proof}
Let $\gamma_\eps$ converge weakly $*$ to $\gamma$, we first have to prove the $\Gamma$-liminf inequality
\begin{equation}\label{galimi}
\liminf_{\eps\to 0} J_{N, \eps} (\gamma_\eps) \ge J_N(\gamma).
\end{equation}
We may assume that $\gamma_\eps\in \Gamma_N (\pi_{0,T})$ for a vanishing sequence of $\eps$ otherwise there is nothing to prove. Since $ \Gamma_N (\pi_{0,T})$ is weakly $*$ closed we then also have $\gamma \in  \Gamma_N (\pi_{0,T})$. Using the fact that for $\gamma_\eps \in   \Gamma_N (\pi_{0,T})$, $\Ent(\gamma_\eps) \ge \Ent(\Leb^{N+1})= (N+1) \Ent(\Leb)$ and the estimate \pref{convergencecost} we get
\[J_{N, \eps} (\gamma_\eps)  \ge -M (\eps N \vert \log(\frac{\eps T}{N})\vert)+ \int_{(\T^d)^{N+1}} c_N \mbox{d} \gamma_\eps   + (N+1) \eps  \Ent(\Leb) \]
and since $c_N$ is continuous, \pref{galimi} immediately follows.

\smallskip

As for the $\Gamma$-limsup inequality, we have, given $\gamma\in \Gamma_N (\pi_{0,T})$, to find $\gamma_\eps\in  \Gamma_N (\pi_{0,T})$ converging weakly $*$ to $\gamma$ and such that
\begin{equation}\label{galims}
\limsup_{\eps\to 0} \int_{(\T^d)^{N+1}} c_{N, \eps} \mbox{d} \gamma_\eps +\eps \Ent(\gamma_\eps) \le \int_{(\T^d)^{N+1}} c_N \mbox{d} \gamma.
\end{equation}
We approximate $\gamma\in \Gamma_N (\pi_{0,T})$ by $\gamma_\eps$ as follows. First disintegrate $\gamma$ with respect to its projection on the first and last components as 
\[\gamma= \gamma^{x_0, x_N} \otimes \pi_{0,T}\]
and set 
\[\gamma_\eps=\gamma_\eps^{x_0, x_N} \otimes \pi_{0,T} \mbox{ with }  \gamma_\eps^{x_0, x_N}=g_\eps^{\otimes(N-1)} \star \gamma^{x_0, x_N}.\]
By construction ${\proj_{0, N}}_\# \gamma_\eps=\pi_{0,T}$ and ${\proj_k}_\# \gamma_\eps= g_\eps \star \Leb =\Leb$ since $\Leb$ is invariant by the heat flow, we thus have $\gamma_\eps \in \Gamma_N(\pi_{0,T})$ and $\gamma_\eps$ converges to $\gamma$ as $\eps\to 0$. To estimate $\Ent(\gamma_\eps)$, we first observe that
\[\Ent(\gamma_\eps)=\Ent(\pi_{0,T}) +\int_{\T^d \times \T^d} \Ent(\gamma_\eps^{x_0,x_N}) \mbox{d} \pi_{0,T}(x_0, x_N)\]
and then, by construction of $\gamma_\eps^{x_0, x_N}$ and the convexity of $\Ent$ 
 \[ \begin{split}
 \Ent(\gamma_\eps^{x_0,x_N}) & =   \Ent\Big( \int_{{(\T^d)}^{N-1}} \prod_{k=0}^{N-1} g_\eps(x_k- \cdot) \mbox{d} \gamma^{x_0, x_N}(x_1, \cdots, x_{N-1})  \Big) \\
 &\le \int_{{(\T^d)}^{N-1}} \Ent( \prod_{k=0}^{N-1} g_\eps(x_k- \cdot))  \mbox{d} \gamma^{x_0, x_N}(x_1, \cdots, x_{N-1})\\
      &= \Ent( g_\eps^{\otimes(N-1)}) =(N-1) \Ent(g_\eps)
 \end{split}\]
 thanks to Theorem \ref{gaussianest} we have
 \[  \Ent(g_\eps) \le \log(\Lambda)-\frac{d}{2} \log(2\pi \eps)\]
 so, thanks to \pref{conditionentfin}, we get $\Ent(\gamma_\eps)=O(\vert \log(\eps)\vert)$ hence $\limsup_{\eps \to 0} \eps \Ent(\gamma_\eps)\le 0$, thanks to \pref{convergencecost} we thus deduce \pref{galims}. 


\end{proof}

\begin{rem}\label{gammaconvs}
Of course,  in the periodic case, there is also $\Gamma$-convergence as $\eps \to 0^+$ if one uses $c_N$ instead of $c_{N, \eps}$ i.e. when considering \pref{entrnaive} instead of  \pref{bresinkh}.  The case of a domain with boundary is less clear to us, in particular the $\Gamma$-limsup argument by convolution with the heat flow above does not work in this case.

\end{rem}

\section{Sinkhorn algorithm}\label{sec-impl}

Once $\eps$ and $N$ are fixed, both entropic regularizations \pref{entrnaive} (written as in \pref{entrnaivekl}) and \pref{mmkent} can be written in a common way. It reads as a Kullback-Leibler projection on the set $\Gamma_N(\pi_{0,T})$, of a certain reference measure $\alpha$ whose density is a product kernel:
\begin{equation}\label{projkl}
\inf_{\gamma \in \Gamma_N(\pi_{0,T})} \H(\gamma \vert \alpha ) \mbox{ with } \alpha(x_0, \cdots x_N):=\prod_{k=0}^{N-1} K(x_{k+1}-x_k)
\end{equation}
where, in the case of  \pref{entrnaive}, $K$ is the Gaussian kernel 
\[K(x)= \exp\Big(-\frac{ N \dist^2(x,0) }{2\eps T}\Big)\]
(we have omitted the normalizing constant which does play any role in the minimization problem) and in the Bredinger case \pref{mmkent}, $K$ is the heat kernel
\[K(x)=g_{\frac{\eps T}{N}}(x).\]
There is a unique solution to \pref{projkl} which is of the form
\begin{equation}\label{defgamaab}
\gamma(x_0, \cdots, x_N)=\gamma_{a,b}(x_0, \cdots x_N):=  b(x_0, x_N) \prod_{k=1}^{N-1} a_k(x_k) \alpha(x_0, \cdots x_N)
\end{equation}
where $b=b(x_0,x_N)$ and $a=(a_1(x_1), \cdots, a_{N-1}(x_{N-1}))$ are positive potentials (exponentials of the Lagrange multipliers\footnote{this is formal  since existence of Lagrange multipliers for the continuous problem cannot be taken for granted in infinite dimensions unless a demanding qualification-like assumption is met requiring that $\pi_{0,T}$ somehow lies in the interior of the  domain of the relative entropy. Nevertheless, once discretized in space, the problem becomes a finite-dimensional smooth convex  minimization with linear constraints so that the existence of such multipliers is guaranteed. To reduce the amount of notation here, we use the same notations for the continuous problem \pref{projkl} as for the discretized one where  integrals are replaced by finite sums.}    associated to the marginal constraints). These positive potentials are (uniquely up to multiplicative constants with unit product) determined by the requirement that $\gamma_{a, b} \in \Gamma_N(\pi_{0,T})$ i.e. the relations
\begin{equation*}\label{marge0Tok} 
\pi_{0,T}(x_0, x_N)=b(x_0, x_N) \int_{\D^{N-1}} \prod_{k=1}^{N-1} a_k(x_k)   \alpha(x_0, \cdots x_N) \mbox{d} x_1 \cdots \mbox{d} x_{N-1}
\end{equation*}
and for  $k=1, \cdots, N-1$
\begin{equation*}\label{margekok} 
\frac{1}{ \vert \D \vert} = a_k(x_k) \int_{\D^{N}} \prod_{j=1,  j\neq k}^{N-1} a_j(x_j) b(x_0, x_N)  \alpha(x_0, \cdots x_N) \mbox{d} x_0  \cdots \mbox{d} x_{k-1}  \mbox{d} x_{k+1} \cdots \mbox{d} x_N.
\end{equation*}
The idea of Sinkhorn algorithm (also known as IPFP, \emph{Iterated Proportional Fitting procedure}) is to update one multiplier so as to fit one marginal constraint at a time. The algorithm constructs inductively a sequence of measures with densities as follows. Start with
\[\gamma^{(0)} =\alpha=\gamma_{a^{(0)}, b^{(0)}}, \; a^{(0)}=(1, \cdots, 1), \; b^{(0)}=1\]  
and once $\gamma^{(l)}=\gamma_{a^{(l)}, b^{(l)}}$ (with $a^{(l)}=(a_1^{(l)}, \cdots, a_{N-1}^{(l)})$) has been determined, compute $b^{(l+1)}$ by 
\begin{equation}\label{marge0Tokl} 
b^{(l+1)}(x_0, x_N)  = \dfrac{\pi_{0,T}(x_0, x_N)}{ \displaystyle{\int_{\D^{N-1}} \prod_{k=0}^{N-1} a_k^{(l)}(x_k)   \alpha(x_0, \cdots x_N) \mbox{d} x_1 \cdots \mbox{d} x_{N-1}}}
\end{equation}
and then  
\[a_k^{(l+1)}(x_k)=\frac{1 } {\vert \D\vert A_k^{(l+1)}(x_k)}, k=1, \cdots, N-1, x_k \in \D\]
with
\begin{equation}\label{marge1okl} 
 A_1^{(l+1)} (x_1)   =  \int_{\D^{N}} \prod_{j=2 }^{N-1} a_j^{(l)} (x_j) b^{(l)}(x_0, x_N)   \alpha(x_0, \cdots x_N) \mbox{d} x_0 \mbox{d} x_2 \cdots \mbox{d} x_N
\end{equation}
and  for $k=2, \cdots ,N-1$, setting $\xx_{-k}:=(x_0, \cdots, x_{k-1}, x_{k+1}, \cdots, x_N)$:
\begin{equation}\label{marge2okl} 
A_k^{(l+1)} (x_k)   =  \int_{\D^{N}} \prod_{j=1}^{k-1} a_j^{(l+1)} (x_j)      \prod_{j=k+1}^{N-1} a_j^{(l)} (x_j)     b^{(l)}(x_0, x_N)   \alpha (x_0, \cdots x_N) \mbox{d} \xx_{-k}     
\end{equation}

Finally,  set $\gamma^{(l+1)}=\gamma_{a^{(l+1)}, b^{(l+1)}}$ with $a^{(l+1)}=(a_1^{(l+1)}, \cdots, a_{N-1}^{(l+1)})$. 
This algorithm can be viewed as Kullback-Leibler projecting $\alpha$ in an alternating way onto each linear marginal constraint \cite{Ben}. In finite dimensions (hence for the discretized problem) it is well-known (see \cite{bauschke-lewis}) that this algorithm converges to the projection onto the intersection i.e. $\Gamma_N(\pi_{0,T})$. 

\smallskip

\begin{rem}[Implementation] 
The iterations of Sinkhorn might seem tedious at a first glance because of the integration against $\alpha$, but due to the special form of $\alpha$, these are just series of convolution with the kernel $K$. In the periodic case, this roughly amounts to compute efficiently Fourier coefficients. In addition, a useful property in practice is that the kernel $K$ can be written in tensorized form
\[K(z_1, \cdots, z_d)=\prod_{j=1}^d k(z_j)\] 
where $k$ is a kernel in dimension one.  
\end{rem} 
 
 \begin{rem}[Penalization of the terminal configuration]
When the Lagrangian coupling between initial and final configuration is deterministic, i.e.  $\pi_{0,T}:=(\id, X_T)_\# \Leb$, it is possible to 
take into account this constraint as a penalization~: first the constraint  ${\proj_{0,N}}_\# \gamma=\pi_{0,T}$ is removed from 
(\ref{gammaN}) which therefore does not depend anymore on $\pi_{0,T}$, secondly a least square penalization with a positive parameter $\beta$ 
is added to (\ref{defcn}) which becomes 
\begin{equation*}
c_{N,\beta}(\xx_N):=\frac{N}{2T} \sum_{k=0}^{N-1} \dist^2(x_{k+1}, x_k)  +  \beta\, \dist^2(x_{N}, X_T(x_0))
\end{equation*}
The cost now depends on $X_T$ and $\beta$.   

\smallskip

The impact on Sinkhorn algorithm is  small and the complexity unchanged~:
The Lagrange multiplier $b$ disappears, take $b \equiv 1$ in (\ref{marge0Tokl}-\ref{marge1okl}-\ref{marge2okl}). 
In the same equations, the kernel $\alpha$  (\ref{projkl}) becomes also in (\ref{marge0Tokl}-\ref{marge1okl}-\ref{marge2okl}) 
\begin{equation*}
\alpha_\beta (x_0, \cdots x_N) =  \alpha (x_0, \cdots x_N) \, \exp\Big(-\frac{ \beta  \vert x_{N} - X_T(x_0)   \vert^2 }{2\eps }\Big)
\end{equation*}
Of course, $\beta$ has to be dimensionalized according to  $\frac{T}{N}$.
  \end{rem}

\section{Numerical results}\label{sec-results}

\subsection{One-dimensional  experiments}

We first reproduce a 1D result obtained with the method described in section 5 and taken from \cite{Ben}. 
It is a simulation of a test case proposed (and solved with a Lagrangian method) in \cite{brenier1989least}.
It provides a good warm up for the presentation of  2D results  which are new.  \\

The first test case is {\em not periodic}, it is set on $ {\cal D } = [0,1] $ equipped with the standard Euclidean distance.
The final configuration is given by $ X_T(x_0) = 1 -x_0$ which 
is not an orientation preserving diffeomorphism.  Arnold problem (\ref{arnold}) does not make sense since  {\em classical} particles cannot cross. \\

We apply the algorithm of section 5 to Brenier $\GIF$ relaxation.  The multimarginal transport plans gives a Eulerian 
measure of the mass movements.  In order to  track the underlying Lagrangian motion  we 
represent in Figure \ref{fig:mm3} and  for different times $t_k$   
the quantity 
\begin{equation} 
\label{ff1} 
P_{t_k}(x_0,x_k) = \int_{\D^{N-2}}  \gamma(  {x_0}   \cdots   x_N  ) \mbox{d} x_1 \cdots \mbox{d} x_{k-1} \mbox{d} x_{k+1} \cdots \mbox{d} x_N,
\end{equation} 
which represents  the amount of mass which has traveled from $x_0$ to $x_k$ between initial time and time $t_k$. If the solution was
classical and deterministic (but it is not), it would correspond to $(\id,X_{t_k})_\# \Leb$. \\

The initial and final correlations  $ (\id,\id)_{\#} \Leb$ and  $ (\id,X_T)_{\#} \Leb$ are prescribed.
All marginals  are the Lebesgue measure.   The solution forces  the mass to split and  the  ``generalized particles'' to mix and cross to minimize the  kinetic energy of the GIF.   

The results, displayed in figure \ref{fig:mm3} are consistent with the 1989 Lagrangian simulation of Brenier \cite{brenier1989least}.   
For this case we also plotted in figure \ref{fig:it-hil} the the Hilbert projective metric between two consecutive iterations of Sinkhorn and for three different number of marginals.
The Hilbert metric between two strictly positive vectors $p,q\in\R_{++}^M$ is defined as
\begin{equation}
 \Hil:=\log\bigg (\dfrac{\max_i \frac{p_i}{q_i}}{\min_i\frac{p_i}{q_i}} \bigg).
\end{equation}
Since in this case $N+1$ marginals are considered and so $N+1$ Lagrange multipliers $a_i$, the Hilbert metric on the product space can be written as the sum of $N+1$ terms: if we consider the variables $a_i$ at step $l$ and $l+1$ the Hilbert metric is written as
\begin{equation}
\Hil_k^N(\otimes_{i=0}^N a^{(l+1)}_{i},\otimes_{i=0}^Na^{(l)}_{i}):= \sum_{i=0}^N \Hil(a^{(l+1)}_{i},a_{i}^{(l)}).
\end{equation}
It has been showed in \cite{FRANKLIN1989717,georgiou2015positive} (and for a generalization of Sinkhorn in \cite{chizat2016scaling}) that the map defined by Sinkhorn iterations is indeed contractive
in the Hilbert projective metric for the two-marginals case. As one can see in figure \ref{fig:it-hil} this is still true in the multi-marginal generalization of Sinkhorn, even if it seems that the number of iterations required to convergence
(for the three computations we have used $\Hil_k^N$ as stopping criteria with a tolerance $\eta=10^{-4}$) depends on the number of marginals (or time steps) chosen.
\\


\newcommand{\MyFigEuler}[2]{\imgbox{\includegraphics[width=.19\linewidth]{EulerT#1timeStep#2}}}



\begin{figure}[h!]
	\centering
		\TabFive{
		\MyFigEuler{3}{1} &
		\MyFigEuler{3}{2} &
		\MyFigEuler{3}{3} &
		\MyFigEuler{3}{4} &
		\MyFigEuler{3}{5} \\
		$t=0$ & $t=1/8$ & $t=1/4$ & $t=3/8$ & $t=1/2$
	}
	\TabFour{
		\MyFigEuler{3}{6} &
		\MyFigEuler{3}{7} &
		\MyFigEuler{3}{8} &
		\MyFigEuler{3}{9} \\
		$t=5/8$ & $t=3/4$ & $t=7/8$ & $t=1$
	}
	\caption{%
Non Periodic Case : Gray-map  value of $P_{t_k}$ (see (\ref{ff1}))  for different times $t_k$. Horizontal axis is $x_0$ and vertical axis $x_k$. 
	}
   \label{fig:mm3}
\end{figure}

\begin{figure}
 \centering
 \includegraphics[scale=0.4]{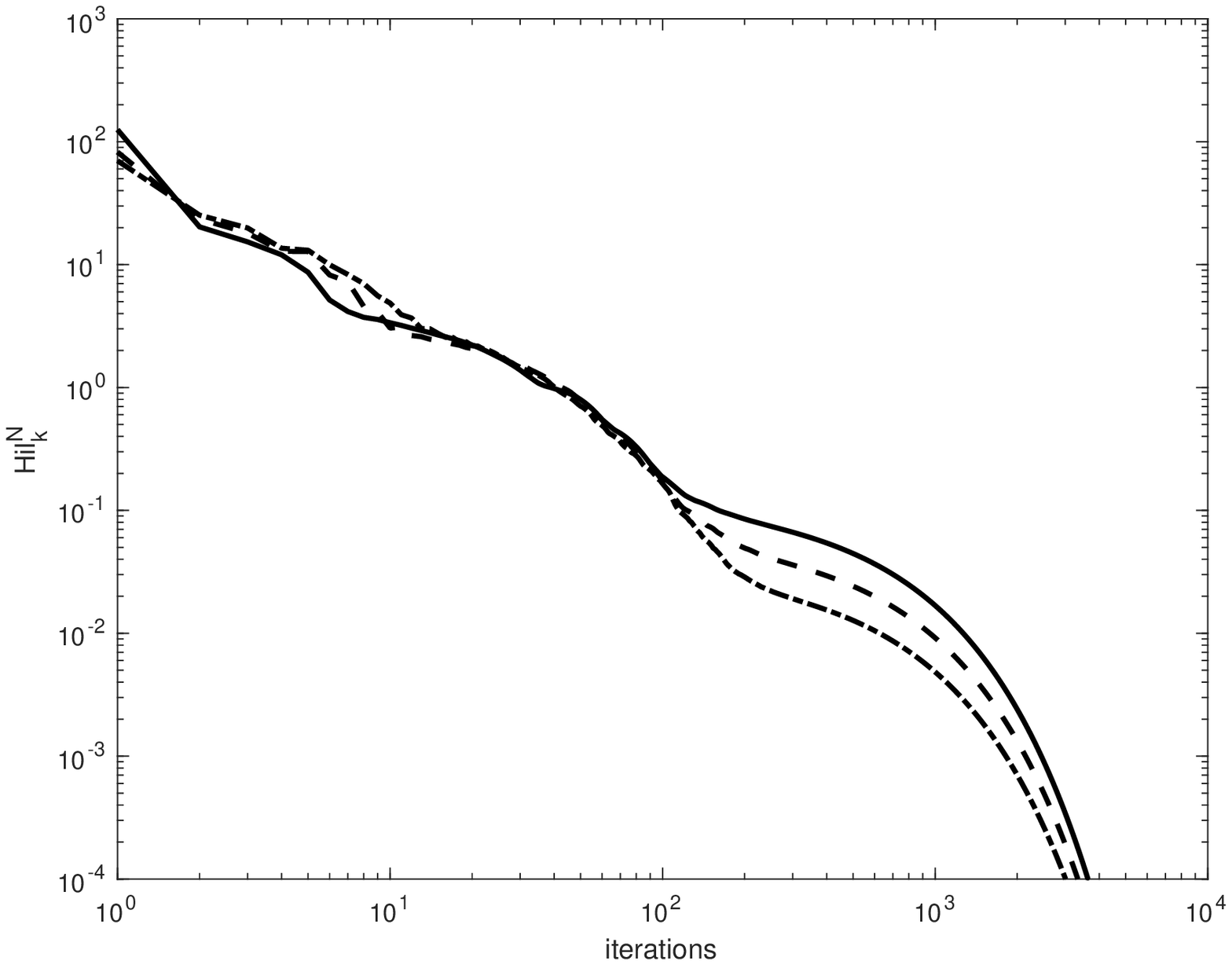}
 \caption{Distance (log scale) between two consecutive iterations of Sinkhorn by using the Hilbert projective metric for $N=8$ (solid line), $N=16$ (dashed line) and $N=32$ (dotted-dashed line)}
 \label{fig:it-hil}
\end{figure}

We also present in figure \ref{fig:mm4} the numerical solution to the same problem 
on the flat torus $\R/\Z$. This can be simply achieved by using the 
periodization of the Euclidean distance $\dist(x,y):=\inf_{k\in \Z} \vert x-y + k\vert\, \; \forall (x,y)\in [0,1]$.
The kernel $K$ is changed accordingly.   The periodization induces a topological change in the 
solution which is classical in periodic  optimal transport problems. In figure \ref{fig:mm5} we present a numerical solution (on the flat torus) for a discontinuous final configuration. 
The computations in figures \ref{fig:mm3}, \ref{fig:mm4} and \ref{fig:mm5} are performed with a uniform discretization of $[0,1]$ with $M=200$ points, $\epsilon=10^{-3}$ and $N=16$.
\begin{figure}[h!]
	\centering
		\TabFive{
		\MyFigEuler{4}{1} &
		\MyFigEuler{4}{2} &
		\MyFigEuler{4}{3} &
		\MyFigEuler{4}{4} &
		\MyFigEuler{4}{5} \\
		$t=0$ & $t=1/8$ & $t=1/4$ & $t=3/8$ & $t=1/2$
	}
	\TabFour{
		\MyFigEuler{4}{6} &
		\MyFigEuler{4}{7} &
		\MyFigEuler{4}{8} &
		\MyFigEuler{4}{9} \\
		$t=5/8$ & $t=3/4$ & $t=7/8$ & $t=1$
	}
	\caption{%
Periodic Case : Gray-map  value of $P_{t_k}$ (see (\ref{ff1}))  for different times $t_k$. Horizontal axis is $x_0$ and vertical axis $x_k$. 
	}
   \label{fig:mm4}
\end{figure}
\begin{figure}[h!]
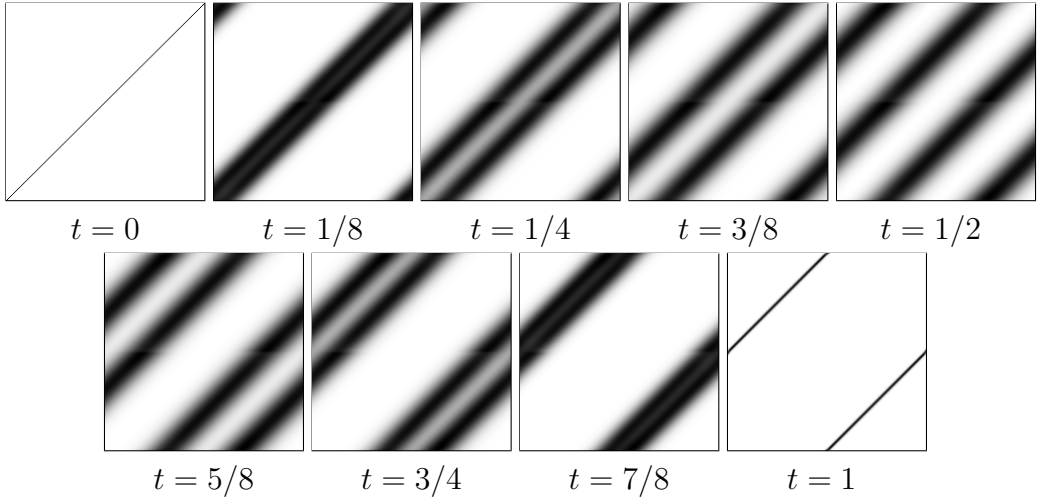

	\centering
		\TabFive{
		\MyFigEuler{5}{1} &
		\MyFigEuler{5}{2} &
		\MyFigEuler{5}{3} &
		\MyFigEuler{5}{4} &
		\MyFigEuler{5}{5} \\
		$t=0$ & $t=1/8$ & $t=1/4$ & $t=3/8$ & $t=1/2$
	}
	\TabFour{
		\MyFigEuler{5}{6} &
		\MyFigEuler{5}{7} &
		\MyFigEuler{5}{8} &
		\MyFigEuler{5}{9} \\
		$t=5/8$ & $t=3/4$ & $t=7/8$ & $t=1$
	}
	\caption{%
Periodic Case ($X_T$ discontinuous) : Gray-map  value of $P_{t_k}$ (see (\ref{ff1}))  for different times $t_k$. Horizontal axis is $x_0$ and vertical axis $x_k$. 
	}
   \label{fig:mm5}
\end{figure}

\subsection{Two-dimensional experiments: the Beltrami flow}

Consider the unit square $\D=[0,1]^2$ and the Beltrami flow obtained from the following time-independent velocity and pressure fields:
\begin{align*} 
 u(\tx_1,\tx_2)&=(-\cos(\pi \tx_1)\sin(\pi \tx_2),\sin(\pi \tx_1)\cos(\pi \tx_2)),\\
 p(\tx_1,\tx_2)&=\dfrac{1}{2}(\sin(\pi \tx_1)^2+\sin(\pi \tx_2)^2).
 \end{align*}
 where $(\tx_1,\tx_2)$ denotes the 2D  Cartesian coordinates
One can verify that they solve the steady Euler equations. \\
 
 It is possible to integrate the ODE $ \partial_t {X}(t,x) = u(X(t,x)) , \,\, \forall x \in \D $ and construct 
 for any final time $T$  classical  solution in $\Sdif$ to a (\ref{arnold}).  For the same final configuration $X_T = X(T,.)$  Brenier 
 established in  \cite[Theorem 5.1]{brenier1989least})  the consistency with the generalized solution of (\ref{brerelax}) provided
 \[
  \sup_{(t,x) \in  [ 0,T]\times \D}   \nabla_x^2p(t,x)  <  \frac{\pi^2}{T^2} \, Id  
 \]
 in the sense of positive definite matrices.   For the Beltrami flow,  
 the maximum eigenvalue of the Hessian of the pressure is  $\pi^2$ which suggests  $T=1$  is  a critical time and that we may 
 expect the optimal $\GIF$ to depart from the $\Sdif$ solution for $T>1$ and exhibit generalized behavior such as splitting/mixing/crossing 
  of "generalized particles".  \\
  
In order to track the Lagrangian behavior of the generalized solution, we extend the 1D representation technique as follows.
We split the domain $\D$ at initial time into three colored sub-domains~:
\begin{align*}
RED:=[0, \frac{1}{3}]\times [0,1], \; GREEN:= (\frac{1}{3}, \frac{2}{3} ]\times [0,1], \; BLUE:=(\frac{2}{3}, 1]\times [0,1]
 \end{align*}
 Then we use again the definition (\ref{ff1}) and plot for each time $t_k$ and for all $x_k \in \D$  the 2D fields  
 \begin{align}
\label{RGB}  
P_{t_k,R}(x_k) =  \int_{RED} P_{t_k}(x_0,x_k) \, \mbox{d} x_0   \\
P_{t_k,G}(x_k)  = \int_{GREEN} P_{t_k}(x_0,x_k) \, \mbox{d} x_0   \\
P_{t_k,B}(x_k)  = \int_{BLUE} P_{t_k}(x_0,x_k) \, \mbox{d} x_0   
 \end{align}
in the corresponding color  with an {\em opacity} depending the actual value of the field.  
Each of these fields represent the amount of mass which has traveled from the initial RED/BLUE/GREEN region at time $t_k$. 
We also plot the sum of the three fields. The value  at any time is the Lebesgue measure but because we use different 
colors with different opacities, it gives an idea of the mixing.  \\

 In figures \ref{fig:beltrami09}, \ref{fig:beltrami13} and \ref{fig:beltramiPi}, we plot  different final  times $T$~:
 the classical Lagrangian solution with no mixing in the first column, $P_R+P_G+P_B$ in the second column and 
 $P_R$/$P_G$/$P_B$ in the remaining three columns.  \\
 
 For $T=0.9$  (figure \ref{fig:beltrami09}) the classical and $\GIF$ solution agree. One should keep in mind that we solve 
 an Entropic regularization of the problem which should be accounted for some of the mass spreading. \\
 
 For $T=1.3$  (figure \ref{fig:beltrami13}) we are past the critical time and the $\GIF$ exhibit a different behavior with a different 
 pattern like a  clockwise rotation in the middle of the domain. It is cheaper in terms of kinetic energy to send the mass across 
 rather that doing the full counterclockwise rotation.
 
 For $T = \pi$  (figure \ref{fig:beltramiPi}) ,  the $\GIF$ is again different but seems to produce less mixing. \\
  
Notice that our generalized Beltrami solutions are consistent with the solution in  \cite{memi} which are computed using a 
non convex Lagrangian formulation.  \\
All the computations are performed with a uniform discretization of $[0,1]^2$ with $M=64\times64$ points, $\epsilon=10^{-4}$ and $N=16$.
 All these simulation take approximately a CPU time of 3 hours, this means that the code must be parallelized in order to become competitive. \\

\newcommand{\MyEulerBeltrami}[1]{\imgbox{\includegraphics[width=.21\linewidth]{#1}}}
\newcommand{\MyEulerBeltramiODE}[2]{\imgbox{\includegraphics[width=.21\linewidth]{EulerBeltramiT#1timestep#2}}}

\begin{figure}[h!]
	\centering
	\TabFive{
	\MyEulerBeltramiODE{9}{1}&
	\MyEulerBeltrami{EulerBeltramiT09timeStep1}&
	\MyEulerBeltrami{EulerBeltrami09timeStep1Red}&
	\MyEulerBeltrami{EulerBeltrami09timeStep1Green}&
	\MyEulerBeltrami{EulerBeltrami09timeStep1Blue}\\
	 & &$t=0$& &  \\
	 \MyEulerBeltramiODE{9}{3}&
	\MyEulerBeltrami{EulerBeltramiT09timeStep4}&
	\MyEulerBeltrami{EulerBeltrami09timeStep4Red}&
	\MyEulerBeltrami{EulerBeltrami09timeStep4Green}&
	\MyEulerBeltrami{EulerBeltrami09timeStep4Blue}\\
	&&$t=\frac{T}{4}$&& \\
	 \MyEulerBeltramiODE{9}{5}&
	\MyEulerBeltrami{EulerBeltramiT09timeStep8}&
	\MyEulerBeltrami{EulerBeltrami09timeStep8Red}&
	\MyEulerBeltrami{EulerBeltrami09timeStep8Green}&
	\MyEulerBeltrami{EulerBeltrami09timeStep8Blue}\\
	&&$t=\frac{T}{2}$&& \\
	\MyEulerBeltramiODE{9}{7}&
	\MyEulerBeltrami{EulerBeltramiT09timeStep12}&
	\MyEulerBeltrami{EulerBeltrami09timeStep12Red}&
	\MyEulerBeltrami{EulerBeltrami09timeStep12Green}&
	\MyEulerBeltrami{EulerBeltrami09timeStep12Blue}\\
	&&$t=\frac{3T}{4}$&& \\
	\MyEulerBeltramiODE{9}{9}&
	\MyEulerBeltrami{EulerBeltramiT09timeStep16}&
	\MyEulerBeltrami{EulerBeltrami09timeStep16Red}&
	\MyEulerBeltrami{EulerBeltrami09timeStep16Green}&
	\MyEulerBeltrami{EulerBeltrami09timeStep16Blue}\\
	&&$t=T$&& \\

	}

	\caption{%
Final time : $T = 0.9$. Columns : Classical Color tracking of the  Lagrangian solution with no mixing in the first column, $P_R+P_G+P_B$ in the second column and 
 $P_R$/$P_G$/$P_B$ in the remaining three columns  (see (\ref{RGB}) for definitions). Rows : Time evolution. The final Lagrangian configuration 
at the bottom left is the final datum $X_T$  in $\pi_{0,T} =(\id, X_T)_\# \Leb$. }
   \label{fig:beltrami09}
\end{figure}

\newcommand{\MyEulerBeltramiODEbis}[2]{\imgbox{\includegraphics[width=.21\linewidth]{EulerBeltramiT#1timeStep#2}}}

\begin{figure}[h!]
	\centering
	\TabFive{
	\MyEulerBeltramiODEbis{13}{1ODE}&
	\MyEulerBeltrami{EulerBeltramiT13timeStep1}&
	\MyEulerBeltrami{EulerBeltrami13timeStep1Red}&
	\MyEulerBeltrami{EulerBeltrami13timeStep1Green}&
	\MyEulerBeltrami{EulerBeltrami13timeStep1Blue}\\
	 & &$t=0$& &  \\
	 \MyEulerBeltramiODEbis{13}{3ODE}&
	\MyEulerBeltrami{EulerBeltramiT13timeStep4}&
	\MyEulerBeltrami{EulerBeltrami13timeStep4Red}&
	\MyEulerBeltrami{EulerBeltrami13timeStep4Green}&
	\MyEulerBeltrami{EulerBeltrami13timeStep4Blue}\\
	&&$t=\frac{T}{4}$&& \\
	 \MyEulerBeltramiODEbis{13}{5ODE}&
	\MyEulerBeltrami{EulerBeltramiT13timeStep8}&
	\MyEulerBeltrami{EulerBeltrami13timeStep8Red}&
	\MyEulerBeltrami{EulerBeltrami13timeStep8Green}&
	\MyEulerBeltrami{EulerBeltrami13timeStep8Blue}\\
	&&$t=\frac{T}{2}$&& \\
	\MyEulerBeltramiODEbis{13}{7ODE}&
	\MyEulerBeltrami{EulerBeltramiT13timeStep12}&
	\MyEulerBeltrami{EulerBeltrami13timeStep12Red}&
	\MyEulerBeltrami{EulerBeltrami13timeStep12Green}&
	\MyEulerBeltrami{EulerBeltrami13timeStep12Blue}\\
	&&$t=\frac{3T}{4}$&& \\
	\MyEulerBeltramiODEbis{13}{9ODE}&
	\MyEulerBeltrami{EulerBeltramiT13timeStep16}&
	\MyEulerBeltrami{EulerBeltrami13timeStep16Red}&
	\MyEulerBeltrami{EulerBeltrami13timeStep16Green}&
	\MyEulerBeltrami{EulerBeltrami13timeStep16Blue}\\
	&&$t=T$&& \\

	}

	\caption{%
	Final time : $T = 1.3$. Columns : Classical Color tracking of the  Lagrangian solution with no mixing in the first column, $P_R+P_G+P_B$ in the second column and 
 $P_R$/$P_G$/$P_B$ in the remaining three columns  (see (\ref{RGB}) for definitions). Rows : Time evolution. The final Lagrangian configuration 
at the bottom left is the final datum $X_T$  in $\pi_{0,T} =(\id, X_T)_\# \Leb$. } 
   \label{fig:beltrami13}
\end{figure}

\begin{figure}[h!]
	\centering
		\TabFive{
	\MyEulerBeltramiODEbis{pi}{1ODE}&
	\MyEulerBeltrami{EulerBeltramiT31416timeStep1}&
	\MyEulerBeltrami{EulerBeltramiPitimeStep1Red}&
	\MyEulerBeltrami{EulerBeltramiPitimeStep1Green}&
	\MyEulerBeltrami{EulerBeltramiPitimeStep1Blue}\\
	 & &$t=0$& &  \\
	 \MyEulerBeltramiODEbis{pi}{3ODE}&
	\MyEulerBeltrami{EulerBeltramiT31416timeStep4}&
	\MyEulerBeltrami{EulerBeltramiPitimeStep4Red}&
	\MyEulerBeltrami{EulerBeltramiPitimeStep4Green}&
	\MyEulerBeltrami{EulerBeltramiPitimeStep4Blue}\\
	&&$t=\frac{T}{4}$&& \\
	 \MyEulerBeltramiODEbis{pi}{5ODE}&
	\MyEulerBeltrami{EulerBeltramiT31416timeStep8}&
	\MyEulerBeltrami{EulerBeltramiPitimeStep8Red}&
	\MyEulerBeltrami{EulerBeltramiPitimeStep8Green}&
	\MyEulerBeltrami{EulerBeltramiPitimeStep8Blue}\\
	&&$t=\frac{T}{2}$&& \\
	\MyEulerBeltramiODEbis{pi}{7ODE}&
	\MyEulerBeltrami{EulerBeltramiT31416timeStep12}&
	\MyEulerBeltrami{EulerBeltramiPitimeStep12Red}&
	\MyEulerBeltrami{EulerBeltramiPitimeStep12Green}&
	\MyEulerBeltrami{EulerBeltramiPitimeStep12Blue}\\
	&&$t=\frac{3T}{4}$&& \\
	\MyEulerBeltramiODEbis{pi}{9ODE}&
	\MyEulerBeltrami{EulerBeltramiT31416timeStep16}&
	\MyEulerBeltrami{EulerBeltramiPitimeStep16Red}&
	\MyEulerBeltrami{EulerBeltramiPitimeStep16Green}&
	\MyEulerBeltrami{EulerBeltramiPitimeStep16Blue}\\
	&&$t=T$&& \\	
	
	}

	\caption{%
	Final time : $T = \pi$. Columns : Classical Color tracking of the  Lagrangian solution with no mixing in the first column, $P_R+P_G+P_B$ in the second column and 
 $P_R$/$P_G$/$P_B$ in the remaining three columns  (see (\ref{RGB}) for definitions). Rows : Time evolution. The final Lagrangian configuration 
at the bottom left is the final datum $X_T$  in $\pi_{0,T} =(\id, X_T)_\# \Leb$. }
   \label{fig:beltramiPi}
\end{figure}

{\bf{Acknowledgements:}} It is our pleasure to thank Christian L\'eonard and Yann Brenier for many fruitful discussions, we are also grateful to Christian L\'eonard  for sharing a preliminary version of \cite{leonardBredinger} with us. The authors are grateful to the Agence Nationale de La Recherche through the projects ISOTACE and MAGA.  
 \bibliographystyle{plain} 

\bibliography{bibli}

\end{document}